\newcommand{\ml}{l\kern-0.035cm\char39\kern-0.03cm}
\newcommand{\md}{d\kern-0.035cm\char39\kern-0.03cm}
\newcommand{\mt}{t\kern-0.035cm\char39\kern-0.03cm}
\newtheorem{lemma}{Lemma}[section]
\newtheorem{corollary}[lemma]{Corollary}
\newtheorem{theorem}[lemma]{Theorem}
\newtheorem{definition}[lemma]{Definition}
\newtheorem{statement}[lemma]{Statement}
\newtheorem{example}[lemma]{Example}
\newtheorem{problem}[lemma]{Problem}
\newtheorem{remark}[lemma]{Remark}
\def\cH{{{\mathcal H}}}
\def\clos#1{\overline{#1}}
\newcommand{\soucin}[2]{\langle#1,#2\rangle}
\renewcommand{\vec}[1]{{\mathbf{#1}}}
\theoremstyle{plain}
\newenvironment{proof}[1][Proof]{\textbf{#1.} }{\ \rule{0.5em}{0.5em}}
\def\logoesf{
\begin{tabular}{l l}
\begin{tabular}{c}
{Supported by}\\
\phantom{\huge X}
\end{tabular}& \ \resizebox{8.58cm}{!}{\includegraphics{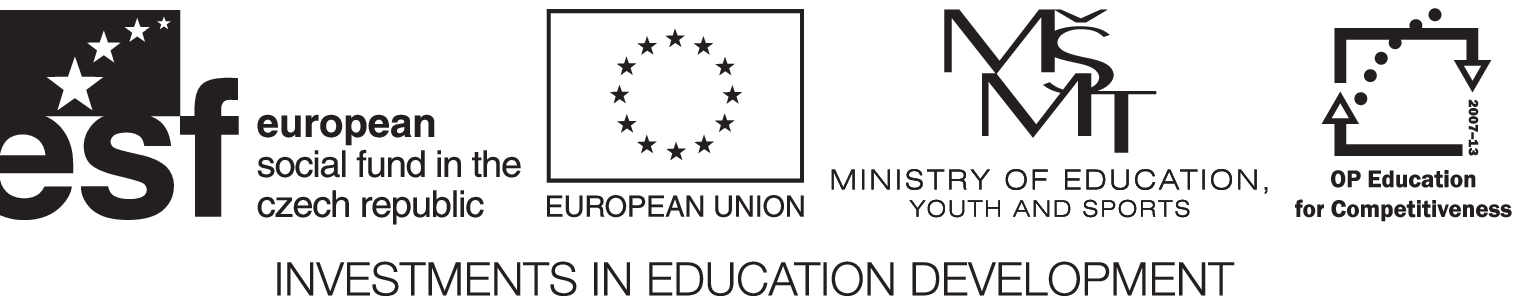}}
\end{tabular}}
\begin{document}
\begin{frontmatter}
\title{On realization of generalized effect algebras}

\author[Paseka]{Jan Paseka\corauthref{cor}\thanksref{wow}}
\corauth[cor]{Corresponding author.}
\ead{paseka@math.muni.cz}
\thanks[wow]{The author acknowledges the support by ESF Project CZ.1.07/2.3.00/20.0051
Algebraic methods in Quantum Logic of the Masaryk University.\\
\logoesf%
}

\address[Paseka]{Department of Mathematics
 and Statistics,
Faculty of Science,
Masaryk University, 
Kotl\'a\v{r}sk\'a~2,
CZ-611~37~Brno, 
Czech Republic}

\date{\today}





\begin{abstract}
A well known fact is that there is a finite orthomodular lattice with an order 
 determining set of states which is not representable in the standard 
quantum logic, the lattice $L({\mathcal H})$ of all closed subspaces of 
a separable complex Hilbert space. 

We show that a generalized effect algebra is  representable in the operator 
generalized effect algebra 
${\mathcal G}_D({\mathcal H})$ of effects  of a complex Hilbert space ${\mathcal H}$ iff 
it has  an order determining set of generalized states.
This extends the corresponding results for 
effect algebras of Rie\v canov\'a and Zajac. Further, any operator 
generalized effect algebra  
${\mathcal G}_D({\mathcal H})$ possesses
an order determining set of generalized states

\noindent
MSC: 03G12; 06D35; 06F25; 81P10
\end{abstract}

\begin{keyword}
Non-classical logics \sep orthomodular lattices \sep effect algebras 
\sep generalized effect algebras
\sep states \sep generalized states 
\end{keyword}
\end{frontmatter}

\section{Introduction}

In \cite{greechie} Greechie proved that there is a finite orthomodular lattice with an order 
 determining set of states which is not  representable in the standard 
quantum logic, the lattice $L({\mathcal H})$ of all closed subspaces of 
a separable complex Hilbert space. 

In \cite{pasekariec} the following problem was formulated. 

{\renewcommand{\labelenumi}{{\normalfont  (\roman{enumi})}}
\begin{problem}\label{problem} Find 
\begin{enumerate}
\settowidth{\leftmargin}{(iiiii)}
\settowidth{\labelwidth}{(iii)}
\settowidth{\itemindent}{(ii)}
\item necessary conditions
\item sufficient conditions
\item necessary  and sufficient conditions
\end{enumerate}
for an (finite, lattice-ordered, MV-effect algebra respectively) effect algebra $E$ 
that  there is a complex Hilbert space  ${\mathcal H}$  and linear 
 operators densely defined in ${\mathcal H}$  as elements of $E$ such that,  
 whenever the effect algebraic sum exists, then it coincides with the usual sum of operators.
\end{problem}}

The formulation of the problem allows to treat this representation either as a injective morphism  or an order reflecting morphism. 
The problem was first solved in \cite{niepa} for finite effect algebras, 
another  partial  answer  was obtained by H.X. Cao et al. in \cite{cao}
and in the full generality it was solved  for arbitrary effect algebras
by Rie\v canov\'a and Zajac in \cite{rieza}.

In what follows  we discuss both cases for generalized effect algebras, 
continuing in the same direction as in \cite{niepa}. Our, 
in some sense very compact, approach includes all the results 
mentioned above. 
After a  summary of the state of the art given in 
Section \ref{prelim}, the  detailed  presentation  of  our  new  results  is  split  in  Section \ref{oneone} and Section \ref{boundedop}.
The main result of Section \ref{oneone} 
are the necessary  and sufficient conditions of 
Problem \ref{problem} for a  
(generalized) effect algebra $E$ to be weakly representable  into  positive 
linear operators. Second, 
in Section \ref{boundedop} 
we establish the necessary  and sufficient conditions of 
Problem \ref{problem} for a  
(generalized) effect algebra $E$ to be representable into  positive 
linear operators.

As a by-product  of our study we establish topological properties of (weakly) representable generalized effect algebras.

\section{Preliminaries} 
\label{prelim}

Effect algebras were introduced by Foulis and 
 Bennett (see \cite{foulis}) 
for modelling unsharp measurements in a Hilbert space. In this case the 
set ${\mathcal E}({\mathcal H})$ of effects is the set of all self-adjoint operators $A$ on 
a Hilbert space ${\mathcal H}$ between the null operator $0$ and the identity 
operator $1$ and endowed with the partial operation $+$ defined 
iff  $A+B$ is in ${\mathcal E}({\mathcal H})$, 
where $+$ is the usual operator sum.  We call it a 
{\em Hilbert space effect algebra}.

In general form, an effect algebra is in fact a partial algebra 
with one partial binary operation and two unary operations satisfying 
the following axioms due to Foulis and 
 Bennett.

\begin{definition}{\rm { \rm (Foulis and Bennett, 1994, \cite{foulis})}
A partial algebra $(E;\oplus,0,1)$ is called an {\it effect algebra}
if 0,1 are two distinguish elements and $\oplus$ is a partially defined
binary operation on $E$ which satisfy the following conditions for
any $x,y,z\in E$:

(E1) $x\oplus y=y\oplus x$ if $x\oplus y$ is defined,

(E2) $(x\oplus y)\oplus z=x\oplus(y\oplus z)$ if one side is
defined,

(E3) for every $x\in E$ there exists a unique $y\in E$ such that
$x\oplus y=1$ (we put $x^\prime=y$),

(E4) If $1\oplus x$ is defined then $x=0$.}
\end{definition}

We often denote the effect algebra $(E;\oplus,0,1)$ briefly by $E$. 

Effect algebras are very natural algebraic 
structures for to be carriers of states or probability measures in the cases 
when elements are noncompatible or unsharp.

Generalizations of effect algebras (i.e., without a top element 1)
have been studied by K\^opka and Chovanec (1994) (difference
posets), Foulis and Bennett (1994) (cones), Kalmbach and Rie\v
canov\'a (1994) (abelian $RI$-posets and abelian $RI$ semigroups)
and Hedl\'ikov\'a and Pulmannov\'a (1996) (generalized $D$-posets
and cancelative positive partial abelian semigroups). It can be
shown that all of the above mentioned generalizations of effect
algebras are mutually equivalent. For recent results concerning 
them see e.g. \cite{pasekariec,polakovic,polriec,rieczapul}. 

\begin{definition}{\rm \advance\parindent by 5pt
\begin{enumerate}
\item[(1)] A {\it generalized effect algebra} ($E$; $\oplus$, 0) is a set $E$ with element $0\in E$ and partial
binary operation $\oplus$ satisfying for any $x,y,z\in E$ conditions
\begin{enumerate}
\item[(GE1)] $x\oplus y = y\oplus x$ if one side is defined,

\item[(GE2)] $(x\oplus y)\oplus z=x\oplus(y\oplus z)$ if one side is
defined,

\item[(GE3)] if $x\oplus y=x\oplus z$ then $y=z$,

\item[(GE4)] if $x\oplus y=0$ then $x=y=0$,

\item[(GE5)] $x\oplus 0=x$ for all $x\in E$.
\end{enumerate}
\item[(2)] A binary relation $\le$ (being a partial order) and 
a partial binary operation $\ominus$ on $E$ can be defined by:
\begin{center}
$x\le y$ \mbox{ and } {{$y\ominus x=z$}} \mbox{ iff }$x\oplus z$
\mbox{ is defined and }$x\oplus z=y$\,.
\end{center} 
\item[(3)] $Q\subseteq E$ is called a {\em sub-generalized effect algebra} 
({\em sub-effect algebra})
of $E$ iff out of elements $x,y,z\in E$ with $x\oplus y=z$ at least two are in $Q$ then $x,y,z\in
Q$ and $0\in Q$ ($1\in Q$ for sub-effect algebra).  
Then $Q$ is a generalized effect algebra (effect algebra) in its own right. 
\end{enumerate}}
\end{definition}

\begin{definition} {\rm Let ($E_1$; $\oplus_1$, $0_1$) and 
($E_2$; $\oplus_2$, $0_2$) be generalized effect algebras.}  {\rm \advance\parindent by 5pt
\begin{enumerate}
\item[(1)]  
A map $f : E_1 \to E_2$ is called a {\em morphism} if 
$f(a\oplus_1 b) = f(a) \oplus_2 f(b)$, for any $a, b\in E_1$
with defined $a \oplus_1 b$.
\item[(2)]   An injective morphism $f:E_1 \to  E_2$ such that 
$f(E_1)$ is a sub-generalized effect algebra of $E_2$
is called an {\em embedding} (or {\em monomorphism}).
\item[(3)]   A surjective monomorphism $f : E_1 \to E_2$ is 
called an {\em isomorphism}. 
\item[(4)]  A morphism $f:E_1 \to E_2$  is called  {\em order reflecting} 
if $f(a)\leq f(b)$ implies $a\leq b$ for all $a, b\in E_1$. 
\end{enumerate}}
\medskip

{\rm{}Any embedding is an order reflecting morphism and 
any  order reflecting morphism is  an injective morphism  but not conversely. However, 
any order reflecting morphism of effect algebras is an embedding (see  \cite[Proposition 1]{pasekariec2}).}

\end{definition}

Throughout the paper we assume that 
$\cH$ is a complex Hilbert space, i.e., a linear space with inner product
 $\langle\cdot\,,\cdot\rangle$ which is
complete in the induced metric. Recall that here for any 
$\vec{x},\vec{y}\in\cH$ we have 
$\langle \vec{x},\vec{y}\rangle\in\mathbb C$ 
(the set of complex numbers) such that 
$\langle \vec{x},\alpha \vec{y}+\beta \vec{z}\rangle=%
\alpha\langle \vec{x},\vec{y}\rangle+%
\beta\langle \vec{x},\vec{z}\rangle$ for all $\alpha,\beta\in\mathbb C$ and
$\vec{x},\vec{y},\vec{z}\in\cH$. Moreover, 
$\overline{\langle \vec{x},\vec{y}\rangle}=\langle \vec{y},\vec{x} \rangle$ 
and finally $\langle \vec{x},\vec{x}\rangle \ge 0$ at which 
$\langle \vec{x},\vec{x}\rangle=0$ iff $\vec{x}=0$.
The term {\it dimension} of $\cH$ in the following always means the {\it Hilbertian dimension} defined as the
cardinality of any orthonormal basis of $\cH$ (see \cite[p.~44]{blank}).

Moreover, we will assume that all considered linear operators $A$ (i.e., linear maps $A:D(A)\to\cH$) have
a domain $D(A)$ a linear subspace dense in $\cH$ with respect to metric topology induced by inner product,
so $\clos{D(A)}=\cH$ (we say that $A$ is {\em densely defined}).

A linear operator \(Q\) on a dense subspace \(D\)
of a Hilbert space is {\it positive} if, for all $\vec{x}\in D$,
\begin{center}
\(0\leq\soucin{\vec{x}}{Q(\vec{x})}\).
\end{center}

It is well-known that, for any set $M$, 
$$
l_2(M) = \{(x_m)_{m\in  M} \mid  x_m\in {\mathbb C}, 
\sum_{m\in M} |x_m|^{2} < \infty\}
$$
with the inner product $\langle (x_m)_{m\in  M}, (y_m)_{m\in  M}\rangle %
= \sum_{m\in M} \overline{x}_m y_m$  is a Hilbert space.  Recall that, 
for $(z_m)_{m\in  M}\in {\mathbb C}^{M}$ we have 
$\sum_{m\in M}  z_m=z\in {\mathbb C}$
if and only if for every $\varepsilon > 0$ there exists a 
finite $D_{\varepsilon} \subseteq M$ 
such that for every finite $G \subseteq M$ 
we have $D_{\varepsilon} \subseteq G \subseteq M$ 
$\Longrightarrow$ $|z- \sum_{m\in G}| < \varepsilon$. 
Hence the subspace 
${\mathcal E}_{lin}(M)=\{(x_m)_{m\in  M}\in l_2(M) \mid x_m=0 \ 
\text{for}\ \text{all}\ \text{but}\ \text{finitely}\ \text{many}\ 
m\in M\}$ is dense in $l_2(M)$. Note that  Kronecker's delta $\delta$ is a function of two variables which is $1$ if they are equal and $0$ otherwise. 
Clearly, ${\mathcal E}_{lin}(M)$ is a linear hull of the set 
$\{(\delta_{m, n})_{m\in M} \mid n\in M\}$. 

\begin{statement}%
\label{totoznost} 
\label{groupis} Let ${\cH}$ be a 
complex Hilbert space and  let $D\subseteq {\cH}$ be a linear subspace dense in ${\cH}$ 
(i.e. $\overline{D}={\cH}$). Then the following holds:
\begin{enumerate} 
\item[{\rm(1)}]{} {\rm{}(\cite[Theorem 1]{ptpaseka})} Let 
$$Lin_{D}({\cH})=\{A:D\to {\cH} \mid A\ \text{is a linear operator 
defined on}\ D\}.$$
Then $(Lin_{D}({\cH}); +, \leq, 0)$ is a partially ordered commutative group 
where $0$ is the null operator, $+$ is the usual sum of operators defined on $D$ and 
$\leq$ is defined for all $A, B\in Lin_{D}({\cH})$ 
by $A\leq B$ iff $B-A$ is positive.

\item[{\rm(2)}] {\rm{}([14], Theorem 3.1, see also [15])} Let 
$$
\begin{array}{r c l}
{\mathcal G}_D({\mathcal H})&=&%
\{A:D\to {\mathcal H} \mid A\ \text{is a positive linear operator 
defined on}\ D\}.
\end{array}$$
Then $({\mathcal G}_D({\mathcal H}); \oplus, 0)$ is a generalized effect algebra 
where $0$ is the null operator, $\oplus$ is the usual sum of operators defined on $D$. 
\end{enumerate}
\end{statement}

\begin{definition}\label{d26}{\rm 
Let $(E;\oplus,0)$ be a generalized effect algebra. Assume further
that ${\cH}$ is a complex Hilbert space and  
let $D\subseteq {\cH}$ be a linear subspace dense in ${\cH}$. We say that 
{\rm \advance\parindent by 5pt
\begin{enumerate}
\item[(1)]  $E$ is {\em weakly representable in positive linear operators} (shortly  {\em weakly representable})  
if  there is an injective
morphism $\varphi:E \to {\mathcal G}_D({\mathcal H})$.
\item[(2)]  $E$ is {\em representable in positive linear operators}  (shortly {\em representable})  if  there is an order reflecting 
morphism $\varphi:E \to {\mathcal G}_D({\mathcal H})$.
\end{enumerate}  }}
\end{definition}

Note that Definition \ref{d26} is a generalization of the definition of representable
effect algebras introduced in \cite{cao} and of the Hilbert space effect representation
introduced in \cite{rieza}.

\begin{definition}{\rm 
Let $(E;\oplus,0)$ be a generalized effect algebra. 
A map $s: E \to {\mathbb R}^{+}_0$
is called a {\em generalized state} if $s$ 
is a morphism (here ${\mathbb R}^{+}_0$
is assumed as a generalized effect algebra with the usual addition of real numbers). 

If moreover $E={\mathcal G}_D({\mathcal H})$ 
for some complex  Hilbert space ${\mathcal H}$ and 
$D\subseteq {\cH}$ a linear subspace dense in ${\cH}$, and 
$\vec{x}\in D$ 
we call the mapping 
$\omega_{\vec{x}}:{\mathcal G}_D({\mathcal H}) \to {\mathbb R}^{+}_0$ 
defined by the prescription 
$\omega_{\vec{x}}(A)={\soucin{\vec{x}}{A(\vec{x})}}$, 
$A\in {\mathcal G}_D({\mathcal H})$
a {\em generalized vector state}. Clearly, any 
 generalized vector state is a  generalized state on 
${\mathcal G}_D({\mathcal H}) $.

Let $S$ be a set of generalized states on $E$. Then 
 \begin{enumerate}
 \item[{\rm(i)}] $S$ {\em separates points} if 
 \begin{center}\(((\forall s \in S)\, s(a)= s(b))\implies a=b\)
\end{center}
for any elements \(a,b\in E\);
\item[{\rm(ii)}] $S$ is called {\em order determining} if 
 \begin{center}\(((\forall s \in S)\, s(a)\leq s(b))\implies a\leq
b\)\end{center}
for any elements \(a,b\in E\);
\item[{\rm(iii)}] $S$ is said to be {\em bounded} if, for any element \(a\in E\), there is $c_a>0$ such that 
 \begin{center}\((\forall s \in S)\, s(a)\leq c_a\); 
\end{center}
\item[{\rm(iv)}] we denote by $\tau_E^{S}$ the {\em  weak topology} 
(the topology generated by the family of mappings $S$ \cite[p. 31]{engelking}) {\em  on} $E$ 
{\em with respect to the set} $S$. If $E={\mathcal G}_D({\mathcal H})$ and 
$S$ is the set of all  generalized vector states on $E$ we use notation $\tau_D$ instead of $\tau_E^{S}$ .
\end{enumerate}

Let $(E;\oplus,0_E, 1_E)$ be an effect algebra and $s$ be  a  generalized state on $E$. Then 
$s$ is  
a {state} iff $s(1_E)= 1$.}
\end{definition}

Evidently, any order determining set of states separates points. 
Moreover, a set $S$ of generalized states on $E$ 
separates points iff the  map $i_S:E \to {({\mathbb R}^{+}_0)}^{S}$ defined by 
$i_S(a)=(s(a))_{s\in S}$ for all $a\in E$ is an injective
 morphism of generalized effect algebras. Similarly as 
in  \cite[Theorem 2.1]{gudder},  
 a set $S$ of states on an effect algebra $E$ 
separates points iff the  map $i_S:E \to {{[0, 1]}}^{S}$  
is an injective morphism of  effect algebras. 
In both cases the 
respective  map $i_S$  is continuous relative to  $\tau_E^{S}$ and the respective product topology.

Note also that any non-trivial 
generalized state $g:E\to {\mathbb R}^{+}_0$ 
on an effect algebra $E$ yields a state $s:E\to [0, 1]$ 
given by $s(y)=\frac{g(y)}{g(1)}$ for all $y\in E$.

\begin{example} \label{excd} {\rm Let 
${\mathcal H}={\mathbb C}^{2}$, $\pi_1(x, y)=(x,0)$ 
and $\pi_2(x, y)=(0,y)$ for all $(x, y)\in {\mathbb C}^{2}$. Then the set 
$E=\{0, \pi_1, \pi_2\}$ equipped 
with the commutative partial operation $\oplus$ given 
by $0=0\oplus 0$, $0 \oplus \pi_1=\pi_1$ and $0 \oplus \pi_2=\pi_2$ is 
a generalized effect algebra with an order determining set of generalized 
states induced from ${\mathcal E}({\mathcal H})$ but it is not 
a sub-effect algebra of ${\mathcal E}({\mathcal H})$. Namely, 
$\pi_1\oplus_{{\mathcal E}({\mathcal H})} \pi_2=\hbox{id}_{{\mathcal H}}$ 
but $\hbox{id}_{{\mathcal H}}\notin E$. Moreover, 
the  inclusion map $i:E \to {\mathcal E}({\mathcal H})$ is an order 
reflecting morphism and it is not an embedding.}
\end{example}

Suppose we are given a topological space $X$, a family 
$\{ Y_s\}_{s\in S}$ of topological spaces and
a family of continuous maps 
${\mathcal F } = \{f_s\}_{s\in S}$ where  
$f_s: X \to Y_s$.  If for every 
$x \in X$ and every closed set $G \subseteq X$ such that 
$x \not\in G$ there exists a map 
$f_s\in {\mathcal F}$  such that 
$f_s(x) \not\in \overline{f_s(F)}$, then we say that the {\em family }
$\mathcal F$ {\em separates points and closed sets} (see \cite[p. 82]{engelking}).

More details on linear operators on Hilbert spaces  with many 
examples and contraexamples can be found in \cite{blank}, on topology 
in \cite{engelking}  
and about (generalized) effect algebras in \cite{dvurec}.

   \section{Injective morphisms and generalized states}
   \label{oneone}

We are going to show that a  
(generalized) effect algebra $E$ is weakly representable in  positive 
linear operators  if and only if it has 
a set $S$ of generalized states such that $S$ separates points. In 
addition, any generalized state from $S$ is induced by a suitable 
generalized vector state.

\begin{theorem}\label{umthex}
Let $E$ be a  generalized 
effect algebra \(E\) and let $S$ be a set of generalized states 
on $E$. Then there exists a  morphism 
$\varphi_S:E \to {\mathcal G}_{{\mathcal E}_{lin}(S)}(l_2(S))$ such that, 
for all $s\in S$, there is a vector 
$\vec{x}_s\in {l_2(S)}$  satisfying $s=\omega_{\vec{x}_s}\circ \varphi$. 

Moreover, 
\begin{enumerate}
\item[{\rm (i)}] $\varphi_S:(E,\tau_E^{S})\to ({\mathcal G}_{{\mathcal E}_{lin}(S)}(l_2(S)), \tau_{{\mathcal E}_{lin}(S)})$ is 
a continuous map;%
\item[{\rm (ii)}] there is an index set $T$ and 
a family ${\mathcal F } = \{f_t\}_{t\in T}$ of continuous maps 
that separates points and closed sets where  
$f_t: E \to [0,1]$;  
\item[{\rm (iii)}] if  $S$ separates points then $\varphi_S$ is injective and $\tau_E^{S}$ is Tychonoff;
\item[{\rm (iv)}]  if  $S$ is order determining then $\varphi_S$ is order reflecting ;
\item[{\rm(v)}] if $S$ is  {bounded} then, for any element \(a\in E\), $\varphi_S(a)$ is a bounded operator.
\end{enumerate}
\end{theorem}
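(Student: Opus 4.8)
The plan is to realise $\varphi_S$ by \emph{diagonalisation}: the space $l_2(S)$ comes with canonical generators $e_s=(\delta_{s,t})_{t\in S}$ of the dense subspace ${\mathcal E}_{lin}(S)$, and I would let $\varphi_S(a)$ act on ${\mathcal E}_{lin}(S)$ by $\varphi_S(a)\big((x_t)_{t\in S}\big)=\big(t(a)\,x_t\big)_{t\in S}$. Since each $(x_t)_{t\in S}\in{\mathcal E}_{lin}(S)$ has only finitely many non-zero coordinates, this is a well-defined linear operator ${\mathcal E}_{lin}(S)\to{\mathcal E}_{lin}(S)\subseteq l_2(S)$, and for such $\vec{x}$ we get $\langle\vec{x},\varphi_S(a)\vec{x}\rangle=\sum_{t\in S}t(a)\,|x_t|^{2}\ge 0$ because every generalized state is non-negative; hence $\varphi_S(a)\in{\mathcal G}_{{\mathcal E}_{lin}(S)}(l_2(S))$. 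If $a\oplus b$ is defined in $E$, then $t(a\oplus b)=t(a)+t(b)$ for all $t\in S$, so $\varphi_S(a\oplus b)=\varphi_S(a)+\varphi_S(b)$ coordinatewise; the right-hand side is a sum of positive operators, hence positive, so the $\oplus$-sum in ${\mathcal G}_{{\mathcal E}_{lin}(S)}(l_2(S))$ exists and equals it, and $\varphi_S$ is a morphism. Taking $\vec{x}_s=e_s$ gives $\omega_{\vec{x}_s}(\varphi_S(a))=\langle e_s,\varphi_S(a)e_s\rangle=s(a)$, i.e.\ $s=\omega_{\vec{x}_s}\circ\varphi_S$.

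For (i), I would note that, by the universal property of the weak topology $\tau_{{\mathcal E}_{lin}(S)}$ (generated by the generalized vector states $\omega_{\vec{x}}$, $\vec{x}\in{\mathcal E}_{lin}(S)$), continuity of $\varphi_S$ reduces to continuity of each $\omega_{\vec{x}}\circ\varphi_S$; but for $\vec{x}=(x_t)_{t\in S}\in{\mathcal E}_{lin}(S)$ one has $\omega_{\vec{x}}(\varphi_S(a))=\sum_{t\in S}t(a)\,|x_t|^{2}$, a \emph{finite} non-negative combination of the maps $a\mapsto t(a)$, each $\tau_E^{S}$-continuous by construction of $\tau_E^{S}$.

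For (ii), the key is that $\tau_E^{S}$ is the initial topology with respect to maps into the metric (hence completely regular) space ${\mathbb R}^{+}_0$, which I would make explicit. Given $a\in E$ and a $\tau_E^{S}$-closed $G$ with $a\notin G$, choose a basic neighbourhood $\bigcap_{i=1}^{n}s_i^{-1}(W_i)\ni a$ disjoint from $G$, pick continuous $g_i\colon{\mathbb R}^{+}_0\to[0,1]$ with $g_i(s_i(a))=1$ and $g_i\equiv 0$ off $W_i$, and set $f=\bigwedge_{i=1}^{n}(g_i\circ s_i)$; then $f\colon E\to[0,1]$ is $\tau_E^{S}$-continuous, $f(a)=1$, and $f\equiv 0$ on $G$, so $f(a)\notin\overline{f(G)}$. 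Letting $T$ index the collection of all such $f$ (equivalently, letting ${\mathcal F}$ be all continuous $E\to[0,1]$) gives a family that separates points and closed sets.

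For (iii)--(v), I would just test $\varphi_S$ on the $e_s$. If $\varphi_S(a)=\varphi_S(b)$, then $s(a)=\omega_{e_s}(\varphi_S(a))=\omega_{e_s}(\varphi_S(b))=s(b)$ for all $s\in S$, so separation of points forces $a=b$; and $\tau_E^{S}$ is completely regular by (ii) and Hausdorff because $S$ separates points (separate $s(a)\ne s(b)$ in ${\mathbb R}^{+}_0$ and pull back), hence Tychonoff --- this is (iii). For (iv), $\varphi_S(a)\le\varphi_S(b)$ means $\varphi_S(b)-\varphi_S(a)$ is positive, so evaluating its quadratic form at $e_s$ yields $0\le s(b)-s(a)$ for every $s$, whence $a\le b$ if $S$ is order determining. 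For (v), if $s(a)\le c_a$ for all $s\in S$, then on ${\mathcal E}_{lin}(S)$ we have $\|\varphi_S(a)\vec{x}\|^{2}=\sum_{t\in S}t(a)^{2}|x_t|^{2}\le c_a^{2}\|\vec{x}\|^{2}$, so $\varphi_S(a)$ is bounded with $\|\varphi_S(a)\|\le c_a$. I expect everything here to be a direct verification except (ii), whose only subtlety is keeping the separating maps valued in $[0,1]$ rather than in ${\mathbb R}^{+}_0$ --- handled by the truncated-tent minimum above.
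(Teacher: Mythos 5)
Your proposal is correct and follows essentially the same route as the paper: the identical diagonal operator $\varphi_S(a)\bigl((x_t)_{t\in S}\bigr)=(t(a)x_t)_{t\in S}$ on ${\mathcal E}_{lin}(S)$, with (iii)--(v) verified by testing on the canonical vectors $e_s$ exactly as in the paper's proof. The only (cosmetic) divergences are in (i), where you invoke the universal property of the initial topology and compute $\omega_{\vec{x}}\circ\varphi_S$ for arbitrary $\vec{x}\in{\mathcal E}_{lin}(S)$ while the paper matches subbases, and in (ii), where you build the $[0,1]$-valued separating function from a basic open neighbourhood via truncated tents rather than from a finite closed cover of $G$; both are sound, and your writing $|x_t|^2$ in the positivity computation is in fact the more careful version of the paper's $x_s^2$.
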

\begin{proof}  
We define 
$\varphi_S: E \to {\mathcal G}_{{\mathcal E}_{lin}(S)}(l_2(S))$ as follows: 
For any $a\in E$ and ${\mathbf x}=(x_s)_{s\in  S}\in {\mathcal E}_{lin}(S)$, 
we set 
$$\varphi_S(a)({\mathbf x})=(s(a)x_s)_{s\in  S}\in {\mathcal E}_{lin}(S).$$

Then clearly $ \varphi_S(a)$ is a linear operator from ${\mathcal E}_{lin}(S)$ into $l_2(S)$ and 
 the set $\text{supp}({\mathbf x})=\{s\in S \mid x_s\not=0\}$ is finite. Hence 
$\soucin{{\mathbf x}}{\varphi_S(a)({\mathbf x})}=\sum_{s\in \text{supp}({\mathbf x})}s(a)x_s^2\geq 0$ yields that 
$\varphi_S(a)\in {\mathcal G}_{{\mathcal E}_{lin}(S)}(l_2(S))$.

Evidently, for all ${\mathbf x}\in {\mathcal E}_{lin}(S)$, $\varphi_S(0)({\mathbf x})=(s(0)x_s)_{s\in  S}=%
(0)_{s\in  S}={\mathbf 0}$. Hence $\varphi_S(0)=0_{{\mathcal G}(S)}$. 

Now, let $a, b\in E$ such that $a\oplus b$ exists. Then, 
for all ${\mathbf x}\in {\mathcal E}_{lin}(S)$, 
$\varphi_S(a\oplus b)({\mathbf x})=(s(a\oplus b)x_s)_{s\in  S}%
=((s(a)+ s(b))x_s)_{s\in  S}=%
(s(a)x_s)_{s\in  S}+(s(b)x_s)_{s\in  S}=%
\varphi_S(a)({\mathbf x})+\varphi_S(b)({\mathbf x})$. 
Therefore $\varphi_S(a\oplus b)=\varphi_S(a)\oplus \varphi_S(b)$.


For any $s \in S$ and any ${\mathbf x}_{s}=(\delta_{s,t})_{t\in S}$, 
we have to  compute, for all $a\in E$,  
 $(\omega_{\vec{x}_s}\circ \varphi_S)(a)$. We have  
 $$(\omega_{\vec{x}_s}\circ \varphi_S)(a)=%
\soucin{\vec{x}_s}{\varphi_S(a)(\vec{x}_s)}=%
\soucin{(\delta_{s,t})_{t\in S}}{(t(a)\delta_{s,t})_{t\in S}}=%
s(a).$$

(i) Denote by $\tau_{{\mathbb R}^{+}_0}$ the euclidean topology on ${\mathbb R}^{+}_0$. The set 
${\mathcal B}=\bigcup_{s\in S} \omega_{\vec{x}_s}^{-1}(\tau_{{\mathbb R}^{+}_0}) $ is a subbase of $\tau_{{\mathcal E}_{lin}(S)}$ and the 
 set 
${\mathcal C}=\bigcup_{s\in S} s^{-1}(\tau_{{\mathbb R}^{+}_0})%
=\bigcup_{s\in S} (\omega_{\vec{x}_s}\circ \varphi_S)^{-1}(\tau_{{\mathbb R}^{+}_0})= %
 \varphi_S^{-1}({\mathcal B})$ is a subbase of $\tau_{E}^{S}$. It follows that $\varphi_S$ is a continuous map. 


(ii) Let us put $T=\{(a, G) \mid %
a\in E, G\  \text{is a closed set in}\  \tau_{E}^{S}, a\not\in G\}$.
 Now, let $a\in E$ and let $G$ be a closed set in $\tau_{E}^{S}$ such that $a\not\in G$. Since ${\mathcal C}$ is 
a subbase of $\tau_{E}^{S}$ we get that there are $s_1, \dots, s_n\in S$ and closed sets $G_1, \dots, G_n$ 
in $ \tau_{{\mathbb R}^{+}_0}$ such that 
$a\not\in \bigcup_{i=1}^{n} s_i^{-1}( G_{i})$, $G\subseteq \bigcup_{i=1}^{n} s_i^{-1}( G_{i})$. 
Then, for any $i=1, \dots, n$, $s_i(a)\not\in G_i$. 
We have a continuous map $(s_1, \dots, s_n):E\to ({\mathbb R}^{+}_0)^{n}$ given by the prescription 
$(s_1, \dots, s_n)(x)=(s_1)x), \dots, s_n(x))$ for all $x\in E$. Let 
us put $H=\bigcup_{i=1}^{n} {\pi}_i^{-1}( G_{i})$ where 
$ {\pi}_i: ({\mathbb R}^{+}_0)^{n}\to {\mathbb R}^{+}_0$ are the respective projections. 
Then $H$ is closed in the product topology on $ ({\mathbb R}^{+}_0)^{n}$ which is Tychonoff and $(s_1, \dots, s_n)(a)\notin H$. 
Hence there is a continuous map $g: ({\mathbb R}^{+}_0)^{n} \to [0,1]$ such 
that $g((s_1, \dots, s_n)(a)) = 0$ and $g(y) = 1$ for $y \in H$. Let us put 
$f_{(a, G)}=g\circ (s_1,\dots, s_n)$. It follows that $f_{(a, G)}:E \to [0,1]$ is a continuous function such that 
$f_{(a, G)}(a)=0$ and $f_{(a,G)}(G)\subseteq \{1\}$.

(iii) Suppose that  $S$ separates points.  Let us check that $\varphi_S$ is injective. Assume that 
$a, b\in E$ and $\varphi_S(a)=\varphi_S(b)$. 
Then, for any $s \in S$ and any ${\mathbf x}_{s}=(\delta_{s,t})_{t\in S}$, 
we have that $s(a)=\varphi_S(a)({\mathbf x}_{s})%
=\varphi_S(b)({\mathbf x}_{s})=s(b)$. Since $S$ separates points  
 we obtain that $a= b$. 

First, let us show that $\tau_{E}^{S}$ is Hausdorff.  Let $a, b\in E$ such that $a\not =b$. Then there is $s_0\in S$ such that 
 $s_0(a)\not =s_0(b)$. Since $\tau_{{\mathbb R}^{+}_0}$ is Hausdorff there are disjoint open sets 
$U, V\in \tau_{{\mathbb R}^{+}_0}$ such that 
$s_0(a)\in U$,  $s_0(b)\in V$. Hence $a\in s_0^{-1}(U)$ and $b\in s_0^{-1}(V)$, 
$s_0^{-1}(U)$ and $s_0^{-1}(V)$ are disjoint open sets in $\tau_{E}^{S}$.

Now, let $a\in E$ and let $G$ be a closed set in $\tau_{E}^{S}$ such that $a\not\in G$. By (ii) there is 
a continuous function  $f_{(a, G)}:E \to [0,1]$  such that 
$f_{(a, G)}(a)=0$ and $f_{(a, G)}(G)\subseteq \{1\}$. Hence $\tau_{E}^{S}$ is Tychonoff.

(iv) Suppose that  $S$  is order determining.  Let us  check that $\varphi$ is 
order reflecting. Since $\varphi_S$ preserves order  we have to 
check that $a, b\in E$, $\varphi_S(a)\leq \varphi_S(b)$ yields $a\leq b$. 
But, for any $s\in S$, we have by the positivity of 
$\varphi_S(b)- \varphi_S(a)$ that 
$$
\langle (\delta_{s, t})_{t\in S}, %
(\varphi_S(b)- \varphi_S(a))(\delta_{s, t})_{t\in S}\rangle = s(b)-s(a)\geq 0.
$$
Since $S$ is order determining we get $a\leq b$.

(v) Suppose that  $S$  is bounded and let $a\in E$. We have to show that $\varphi_S(a)$ is a bounded operator. 
We have, for any   ${\mathbf x}=(x_s)_{s\in  S}\in {\mathcal E}_{lin}(S)$, 
$$
|| \varphi_S(a)({\mathbf x})||=\sqrt{\sum_{s\in \text{supp}({\mathbf x})}s(a)^{2}x_s^2}%
\leq \sqrt{\sum_{s\in \text{supp}({\mathbf x})}c_a^{2}x_s^2}=%
c_a\sqrt{\sum_{s\in \text{supp}({\mathbf x})}x_s^2}=c_a || {\mathbf x}||.
$$
\hfill
\end{proof}

\begin{remark}\label{remx}{\rm Recall that, for an at  most countable countable set $S$ of generalized states on $E$ and any 
$a\in E$, the operator $\varphi_S(a)$ coincides with the operator $\dot{T}_{(s(a))_{s\in S}}$ introduced in 
\cite[Example 4.1.4 and Example 4.2.2]{blank}. Using the same reasonings as in \cite{blank} one can easily show that 
$\varphi_S(a)$ is actually essentially self-adjoint \cite[p. 96]{blank}.}
\end{remark}

\begin{theorem}\label{mthex}
The following conditions are equivalent for every generalized 
effect algebra \(E\).\\
{\rm(i)}
There exists a set $S$ of generalized states 
on $E$ that separates points;\\
{\rm(ii)}
there exists an injective morphism 
$\varphi:E \to {\mathcal G}_D({\mathcal H})$ for some 
complex  Hilbert space ${\mathcal H}$ and 
$D\subseteq {\cH}$ a linear subspace dense in ${\cH}$;\\ 
{\rm(iii)} there exists a set $T$ and an injective 
morphism of   generalized effect algebras from  \(E\) to  ${({\mathbb R}^{+}_0)}^{T}$. \\
\end{theorem}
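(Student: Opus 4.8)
The plan is to establish the cycle of implications $(i)\Rightarrow(ii)\Rightarrow(iii)\Rightarrow(i)$, which relies almost entirely on machinery already available. The implication $(i)\Rightarrow(ii)$ is immediate from Theorem \ref{umthex}: given a set $S$ of generalized states separating points, take $\mathcal{H}=l_2(S)$ and $D={\mathcal E}_{lin}(S)$, and let $\varphi=\varphi_S$; part (iii) of Theorem \ref{umthex} tells us $\varphi_S$ is injective, and by Statement \ref{groupis}(2) its codomain ${\mathcal G}_{{\mathcal E}_{lin}(S)}(l_2(S))$ is a genuine generalized effect algebra, so $\varphi_S$ is an injective morphism of the required type.

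For $(ii)\Rightarrow(iii)$, suppose we have an injective morphism $\varphi:E\to{\mathcal G}_D({\mathcal H})$. The idea is to compose $\varphi$ with the family of generalized vector states on ${\mathcal G}_D({\mathcal H})$. Concretely, put $T=D$ (or any subset of $D$ rich enough to separate the operators in $\varphi(E)$; taking $T=D$ works) and define $\psi:E\to({\mathbb R}^{+}_0)^{T}$ by $\psi(a)=(\omega_{\vec{x}}(\varphi(a)))_{\vec{x}\in T}=(\soucin{\vec{x}}{\varphi(a)(\vec{x})})_{\vec{x}\in T}$. Each coordinate $a\mapsto\omega_{\vec{x}}(\varphi(a))$ is a composition of the morphism $\varphi$ with a generalized vector state (which by the text following Definition \ref{d26}'s context is a generalized state), hence is a generalized state on $E$; therefore $\psi$ is a morphism of generalized effect algebras. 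Injectivity of $\psi$ follows because a positive operator $Q$ on the dense domain $D$ is determined by the values $\soucin{\vec{x}}{Q\vec{x}}$ for $\vec{x}\in D$: indeed, by the polarization identity these quadratic-form values recover all $\soucin{\vec{x}}{Q\vec{y}}$ with $\vec{x},\vec{y}\in D$, and since $D$ is dense this pins down $Q$ on $D$; so if $\psi(a)=\psi(b)$ then $\varphi(a)=\varphi(b)$ and hence $a=b$.

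The implication $(iii)\Rightarrow(i)$ is the easy direction already essentially recorded in the remarks after the definition of generalized state: if $j:E\to({\mathbb R}^{+}_0)^{T}$ is an injective morphism, then for each $t\in T$ the $t$-th coordinate projection $\pi_t\circ j:E\to{\mathbb R}^{+}_0$ is a morphism into ${\mathbb R}^{+}_0$ viewed as a generalized effect algebra, i.e.\ a generalized state; and $S=\{\pi_t\circ j\mid t\in T\}$ separates points precisely because $j$ is injective (if $s(a)=s(b)$ for every $s\in S$ then all coordinates of $j(a)$ and $j(b)$ agree, so $j(a)=j(b)$, so $a=b$). This closes the cycle.

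The only step requiring genuine thought is the injectivity claim in $(ii)\Rightarrow(iii)$, i.e.\ that an unbounded positive operator on a dense domain is recovered from its diagonal quadratic form; the point to get right is that polarization is valid here since $D$ is a complex linear subspace (we may freely form $\vec{x}+i\vec{y}$), and that no closure or self-adjointness hypothesis is needed because we only need to reconstruct the operator on $D$ itself, not on a larger domain. Everything else is a direct appeal to Theorem \ref{umthex}, Statement \ref{groupis}(2), and the elementary observation that coordinate projections of $({\mathbb R}^{+}_0)^{T}$ are generalized states.
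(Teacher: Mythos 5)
Your proposal is correct and follows essentially the same route as the paper: the implication from (i) to (ii) is delegated to Theorem \ref{umthex}(iii), the equivalence with (iii) is the trivial coordinate-projection observation, and the substantive step is that an operator in ${\mathcal G}_D({\mathcal H})$ is determined by its quadratic form on the dense domain $D$. The only difference is cosmetic: you prove that last fact explicitly by polarization (correctly noting that $D$ is a complex subspace), whereas the paper outsources it to Statement \ref{totoznost}, and you arrange the implications as a cycle $(i)\Rightarrow(ii)\Rightarrow(iii)\Rightarrow(i)$ rather than two separate equivalences.
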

\begin{proof} (i) \(\Longleftrightarrow\) (iii) It is evident. \\
(i) \(\implies\) (ii) 
It follows from Theorem \ref{umthex} (iii).\\
(ii) \(\implies\) (i)
Let $\varphi: E \to ({\mathcal G}_D({\mathcal H}); \oplus, 0)$ 
be an injective morphism for some 
complex  Hilbert space ${\mathcal H}$ and 
$D\subseteq {\cH}$ a linear subspace dense in ${\cH}$.  
Let \(a,b\in E\) and \(a\ne b\). By assumption, \(\varphi(a)\ne \varphi(b)\).
\\
Assume first \(\soucin{\vec{h}}{\varphi(a)(\vec{h})}=%
\soucin{\vec{h}}{\varphi(b)(\vec{h})}\) for
every \(\vec{h}\). According to Statement \ref{totoznost},
the situation cannot occur.
Hence there exists \(\vec{h_0}\in \cH\) for which  
\(\soucin{\vec{h_0}}{\varphi(a)(\vec{h_0})}\ne
\soucin{\vec{h_0}}{\varphi(b)(\vec{h_0})}\). Then 
$s_{a,b}:E\to {\mathbb R}^{+}_0$ given by the prescription 
$y\mapsto
{\soucin{\vec{h_0}}{\varphi(y)(\vec{h_0})}}$ for all $y\in E$
 is a generalized state for which \(s_{ab}(a)\ne s_{ab}(b)\). 
 
 It follows that the set 
$S=\{\omega_{\vec{x}}\circ \varphi \mid \vec{x}\in \cH\}$ separates points.
\end{proof}

\begin{corollary}
\label{cdmujcorrieyapul} Let ${\cH}$ be a 
complex Hilbert space 
and let $D\subseteq {\cH}$ be 
a linear subspace dense in ${\cH}$.  Then 
the topological space 
$({{\mathcal G}_D({\mathcal H})},\tau_D)$ is Tychonoff. 
\end{corollary}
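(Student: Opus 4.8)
The plan is to apply Theorem \ref{umthex} directly with $E = {\mathcal G}_D({\mathcal H})$ and with $S$ taken to be the set of all generalized vector states $\omega_{\vec{x}}$, $\vec{x}\in D$. The key observation needed is that this set $S$ separates points on ${\mathcal G}_D({\mathcal H})$: if $A, B \in {\mathcal G}_D({\mathcal H})$ satisfy $\soucin{\vec{x}}{A(\vec{x})} = \soucin{\vec{x}}{B(\vec{x})}$ for all $\vec{x}\in D$, then by part (1) of Statement \ref{totoznost} (the group structure on $Lin_D({\mathcal H})$, where the order is defined via positivity), both $A - B$ and $B - A$ are positive, hence $A = B$. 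This is exactly the argument already used in the proof of the implication (ii)$\implies$(i) of Theorem \ref{mthex}, so it can simply be cited.

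Granting that $S$ separates points, Theorem \ref{umthex}(iii) tells us that $\tau_E^S$ is Tychonoff. The only remaining point is to check that the topology denoted $\tau_D$ in the Definition (the weak topology on ${\mathcal G}_D({\mathcal H})$ generated by all generalized vector states $\omega_{\vec{x}}$, $\vec{x}\in D$) coincides with $\tau_E^S$ for this particular choice of $S$; but this is immediate from the notational convention introduced in item (iv) of the Definition, which says precisely that $\tau_D$ is shorthand for $\tau_E^S$ when $E = {\mathcal G}_D({\mathcal H})$ and $S$ is the set of all generalized vector states. So the proof is essentially a two-line citation.

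I do not anticipate a real obstacle here; the corollary is a packaging of Theorem \ref{umthex}(iii) together with the separation fact already proved inside Theorem \ref{mthex}. If anything, the only subtlety worth a sentence is making sure the vector states are indexed by $\vec{x}\in D$ rather than $\vec{x}\in{\mathcal H}$ — but since $D$ is dense and the operators are defined on $D$, the state $\omega_{\vec{x}}$ only makes sense for $\vec{x}\in D$ anyway, and the separation argument via Statement \ref{totoznost}(1) uses exactly the positivity condition "$\soucin{\vec{x}}{Q(\vec{x})}\geq 0$ for all $\vec{x}\in D$" from the definition of positive operator, so everything is consistent. Thus the write-up will be: invoke Theorem \ref{mthex}'s separation observation to see $S$ separates points, then invoke Theorem \ref{umthex}(iii) to conclude $\tau_D = \tau_E^S$ is Tychonoff.

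\begin{proof}
Let $S = \{\omega_{\vec{x}} \mid \vec{x}\in D\}$ be the set of all generalized vector states on $E = {\mathcal G}_D({\mathcal H})$. By definition, $\tau_D = \tau_E^S$. We claim that $S$ separates points. Indeed, if $A, B\in {\mathcal G}_D({\mathcal H})$ satisfy $\omega_{\vec{x}}(A) = \omega_{\vec{x}}(B)$, i.e., $\soucin{\vec{x}}{A(\vec{x})} = \soucin{\vec{x}}{B(\vec{x})}$, for all $\vec{x}\in D$, then both $B - A$ and $A - B$ are positive operators, so by Statement \ref{totoznost}(1) we have $A \leq B$ and $B \leq A$, whence $A = B$ (cf.\ the proof of Theorem \ref{mthex}). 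Therefore $S$ separates points, and Theorem \ref{umthex}(iii) yields that $\tau_E^S = \tau_D$ is Tychonoff.
\end{proof}
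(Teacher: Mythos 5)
Your proof is correct and matches the paper's intended derivation: the corollary is stated without proof immediately after Theorem \ref{mthex}, and the evident route is exactly yours — the generalized vector states separate points by the positivity/antisymmetry argument from Statement \ref{totoznost}(1) (as used in (ii)$\implies$(i) of Theorem \ref{mthex}), and then Theorem \ref{umthex}(iii) gives that $\tau_D=\tau_E^S$ is Tychonoff.
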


\begin{corollary}The following conditions and the conditions {\rm(i)-(iii) }
from Theorem \ref{mthex} are equivalent for every 
effect algebra \(E\).\\
{\rm(a)}
There exists a set  $S$ of  states on $E$ that separates points;\\
{\rm(b)} there exists a complex Hilbert space ${\mathcal H}$ and 
an injective morphism of effect algebras from \(E\) to the 
 Hilbert space effect algebra ${\mathcal E}({\mathcal H})$;\\ 
{\rm(c)} there exists a set $T$ and an injective morphism of 
effect algebras from \(E\) to  ${{[0,1]}}^{T}$. 
\end{corollary}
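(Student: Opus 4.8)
The plan is to deduce this corollary from Theorem \ref{mthex} together with the standard dictionary between generalized states and states on effect algebras that was recorded just before Example \ref{excd}. The equivalence (a) $\Longleftrightarrow$ (c) is essentially formal: a set $S$ of states on $E$ separates points precisely when the map $i_S:E\to [0,1]^{S}$, $i_S(a)=(s(a))_{s\in S}$, is an injective morphism of effect algebras (this is exactly the effect-algebra analogue of \cite[Theorem 2.1]{gudder} quoted in the preliminaries), and conversely any injective morphism into $[0,1]^{T}$ gives, by composing with the coordinate projections, a separating family of states indexed by $T$. So (a) $\Longleftrightarrow$ (c) needs only a sentence.

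The substantive implications are (a) $\Longleftrightarrow$ (b) and the link to Theorem \ref{mthex}. First I would show (b) $\implies$ (a): given an injective morphism of effect algebras $\varphi:E\to {\mathcal E}({\mathcal H})$, every operator $\varphi(a)$ is a positive (bounded, self-adjoint) operator defined on all of ${\mathcal H}$, so in particular $\varphi$ is also an injective morphism into ${\mathcal G}_{{\mathcal H}}({\mathcal H})$; applying the implication (ii) $\implies$ (i) of Theorem \ref{mthex} yields a separating set of \emph{generalized} states $S=\{\omega_{\vec x}\circ\varphi\mid \vec x\in {\mathcal H}\}$ on $E$. To upgrade these to genuine states, note that each $\omega_{\vec x}\circ\varphi$ is bounded by $\langle\vec x,\vec x\rangle$ since $\varphi(1_E)\le 1$ in ${\mathcal E}({\mathcal H})$; normalizing by the value at $1_E$ (using the remark after Example \ref{excd} that a nontrivial generalized state on an effect algebra induces a state) one gets, for each pair $a\ne b$ with $(\omega_{\vec x}\circ\varphi)(a)\ne(\omega_{\vec x}\circ\varphi)(b)$, a state separating $a$ and $b$ unless $\langle\vec x,\varphi(a)(\vec x)\rangle$ and $\langle\vec x,\varphi(b)(\vec x)\rangle$ differ only through the normalization — but they are both bounded by $\langle\vec x,\vec x\rangle$, and one can instead simply take the state $s_{\vec x}(y)=\langle\vec x,\varphi(y)(\vec x)\rangle$ when $\|\vec x\|=1$. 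Hence (a) holds.

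For (a) $\implies$ (b): a separating set of states on $E$ is in particular a separating set of generalized states, so by Theorem \ref{mthex} ((i) $\implies$ (ii), i.e.\ the construction $\varphi_S$ of Theorem \ref{umthex}) we obtain an injective morphism $E\to {\mathcal G}_D({\mathcal H})$. The extra input needed is that, because each $s\in S$ is a \emph{state}, we have $s(y)\le s(1_E)=1$ for all $y\le 1_E$, so $S$ is bounded (with $c_a=1$); by Theorem \ref{umthex}(v) each $\varphi_S(a)$ is then a bounded operator with $\|\varphi_S(a)\|\le 1$, and moreover $\varphi_S(1_E)$ is the identity on ${\mathcal E}_{lin}(S)$, which extends to the identity on $l_2(S)$; since $0\le\varphi_S(a)\le\varphi_S(1_E)=\mathrm{id}$ and $\varphi_S(a)$ is essentially self-adjoint (Remark \ref{remx}), its closure lies in ${\mathcal E}(l_2(S))$, and $\varphi_S$ composed with taking closures is an injective morphism of effect algebras into ${\mathcal E}(l_2(S))$. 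Finally the equivalence with (i)--(iii) of Theorem \ref{mthex} follows since (b) trivially implies (ii) of that theorem (an injective morphism into ${\mathcal E}({\mathcal H})\subseteq {\mathcal G}_{{\mathcal H}}({\mathcal H})$), and (iii) of Theorem \ref{mthex} implies (c) here after rescaling coordinates — or more directly, (i) of Theorem \ref{mthex} together with the normalization argument above gives (a).

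The main obstacle I anticipate is purely bookkeeping: making sure the passage from the \emph{generalized} state/operator setting of Theorem \ref{mthex} to the bounded, unital effect-algebra setting is clean, in particular verifying that $\varphi_S$ really lands in ${\mathcal E}(l_2(S))$ — one must check $\varphi_S(1_E)=\mathrm{id}$, that boundedness of $S$ by the constant $1$ gives operator norm $\le 1$, and that the essentially-self-adjoint closure of a positive contraction is again a positive contraction with the same bound, so that the effect-algebra sum (existence of $A\oplus B$ iff $A+B\le\mathrm{id}$) is preserved. None of this is deep, but it is the only place where the argument is more than a one-line citation.
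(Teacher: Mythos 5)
Your proposal is correct and takes essentially the same route as the paper, which closes the cycle (a) $\Rightarrow$ (b) $\Rightarrow$ (ii) $\Rightarrow$ (i) $\Rightarrow$ (a) together with (a) $\Leftrightarrow$ (c), merely citing \cite[Remark 3]{rieza} for (a) $\Rightarrow$ (b) where you instead spell out the construction ($S$ bounded by $c_a=1$, $\varphi_S(1_E)=\mathrm{id}$, continuous extension to $l_2(S)$) --- which is precisely the argument the paper itself gives for the analogous Corollary \ref{corrieya}. The only cosmetic point is that your appeal to Remark \ref{remx} (stated only for countable $S$) is unnecessary, since the unique bounded extension of the symmetric contraction $\varphi_S(a)$ already suffices.
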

\begin{proof} Evidently, (i)  \(\implies\) (a), (b)  \(\implies\) (ii), 
(ii) \(\implies\) (i) and 
(a) \(\Longleftrightarrow\) (c). Let us show that (a)  \(\implies\) (b).
As in \cite[Remark 3]{rieza} we have an injective 
morphism of effect algebras from \(E\) to the 
 Hilbert space effect algebra ${\mathcal E}({ l_2(S)})$.
\end{proof}

   \section{Order reflecting morphisms and generalized states}
   \label{boundedop}
\noindent

We are going to prove that a  
(generalized) effect algebra $E$ is representable in  positive 
linear operators  if and only if it has 
an order determining  set $S$ of generalized states.

Note that a set $S$ of generalized states on $E$ is 
order determining iff the  morphism $i_S:E \to {({\mathbb R}^{+}_0)}^{S}$ is order reflecting. As in  \cite[Theorem 2.1]{gudder},  
 a set $S$ of states on an effect algebra $E$ is 
order determining iff the  morphism 
$i_S:E \to {{[0, 1]}}^{S}$  
is an  embedding of  effect algebras.

\begin{theorem}\label{vnoreni}
For every generalized effect algebra \(E\), the 
following conditions are equivalent.
\\
{\rm(i)} There exists an order determining set 
\(S\) of  generalized  states on $E$;
\\
{\rm(ii)} there exists an order reflecting morphism 
$\varphi: E \to {\mathcal G}_D({\mathcal H})$ for some 
complex  Hilbert space ${\mathcal H}$ and 
$D\subseteq {\cH}$ a linear subspace dense in ${\cH}$;\\  
{\rm(iii)} there exists a set $T$ and an order reflecting morphism  from
 \(E\) to  ${({\mathbb R}^{+}_0)}^{T}$. \\

\end{theorem}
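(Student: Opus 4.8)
The plan is to prove Theorem \ref{vnoreni} by establishing the cycle (i)$\implies$(ii)$\implies$(iii)$\implies$(i), exactly mirroring the structure of the proof of Theorem \ref{mthex}. The implication (i)$\implies$(ii) is already done for us: by Theorem \ref{umthex}(iv), if $S$ is an order determining set of generalized states, then the canonical morphism $\varphi_S:E \to {\mathcal G}_{{\mathcal E}_{lin}(S)}(l_2(S))$ is order reflecting, and ${\mathcal E}_{lin}(S)$ is a dense linear subspace of the Hilbert space $l_2(S)$, so we simply take ${\mathcal H}=l_2(S)$, $D={\mathcal E}_{lin}(S)$, and $\varphi=\varphi_S$.

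For (ii)$\implies$(iii): given an order reflecting morphism $\varphi:E\to{\mathcal G}_D({\mathcal H})$, I would set $T=\cH$ (or any dense subset thereof) and define $\psi:E\to({\mathbb R}^{+}_0)^{T}$ by $\psi(a)=(\omega_{\vec{x}}(\varphi(a)))_{\vec{x}\in T}=(\soucin{\vec{x}}{\varphi(a)(\vec{x})})_{\vec{x}\in T}$. This is a morphism since each $\omega_{\vec{x}}$ is a generalized state on ${\mathcal G}_D({\mathcal H})$ and $\varphi$ is a morphism. To see it is order reflecting, suppose $\psi(a)\le\psi(b)$ coordinatewise; then $\soucin{\vec{x}}{\varphi(a)(\vec{x})}\le\soucin{\vec{x}}{\varphi(b)(\vec{x})}$ for all $\vec{x}\in D$, i.e.\ $\varphi(b)-\varphi(a)$ is a positive operator, so $\varphi(a)\le\varphi(b)$ in ${\mathcal G}_D({\mathcal H})$, and since $\varphi$ is order reflecting, $a\le b$. (One should be mildly careful that $({\mathbb R}^{+}_0)^{T}$ carries its product generalized-effect-algebra structure and that the order there is the coordinatewise one, which matches.)

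For (iii)$\implies$(i): given a set $T$ and an order reflecting morphism $\psi:E\to({\mathbb R}^{+}_0)^{T}$, the family $S=\{\pi_t\circ\psi \mid t\in T\}$, where $\pi_t:({\mathbb R}^{+}_0)^{T}\to{\mathbb R}^{+}_0$ is the $t$-th projection, is a set of generalized states on $E$. If $s(a)\le s(b)$ for every $s\in S$, then $\psi(a)\le\psi(b)$ coordinatewise (this is literally the definition of the order on the product), hence $a\le b$ by order-reflection of $\psi$. So $S$ is order determining.

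I expect essentially no serious obstacle here; the theorem is a clean repackaging of Theorem \ref{umthex}(iv) together with the elementary observation that evaluating against vector states turns a Hilbert-space-operator representation into a product-of-reals representation and conversely projections turn the product representation into a separating family of generalized states. The only point requiring a line of care is the identification of the order on $({\mathbb R}^{+}_0)^{T}$ as the coordinatewise order induced by the generalized effect algebra structure (so that ``$\psi(a)\le\psi(b)$'' is the same as ``$\pi_t(\psi(a))\le\pi_t(\psi(b))$ for all $t$''), which follows immediately from the definition of $\ominus$ and $\le$ in a generalized effect algebra applied componentwise in the product. One may also remark, as a corollary in the spirit of the effect-algebra case already treated, that for an effect algebra $E$ these conditions are further equivalent to the existence of an order determining set of \emph{states} and to an embedding into ${[0,1]}^{T}$, using the normalization $s\mapsto s/s(1)$ noted in the preliminaries.
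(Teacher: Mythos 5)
Your proposal is correct and follows essentially the same route as the paper: (i)$\implies$(ii) via Theorem \ref{umthex}(iv), and the converse direction via the observation that the generalized vector states $\omega_{\vec{x}}\circ\varphi$ form an order determining set because positivity of $\varphi(b)-\varphi(a)$ is precisely the coordinatewise inequality of the values $\soucin{\vec{x}}{\varphi(\cdot)(\vec{x})}$; the paper merely delegates this last point to a citation and treats (i)$\Longleftrightarrow$(iii) as evident rather than arranging a cycle. The only pedantic correction is that the index set in your (ii)$\implies$(iii) step should be $D$ rather than $\cH$, since $\omega_{\vec{x}}$ is only defined for $\vec{x}\in D$ (a slip the paper itself also makes).
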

\begin{proof} (i) \(\Longleftrightarrow\) (iii) It is evident. \\
(i) \(\implies\) (ii) It follows from Theorem \ref{umthex} (iv).\\
(ii) \(\implies\) (i) By \cite[Theorem 5.3]{niepa}. Note only that the 
set 
$S=\{\omega_{\vec{x}}\circ \varphi \mid \vec{x}\in \cH\}$ is order 
determining.
\end{proof}

\begin{corollary}
\label{mujcorrieyapul} Let ${\cH}$ be a 
complex Hilbert space 
and let $D\subseteq {\cH}$ be 
a linear subspace dense in ${\cH}$.  Then 
the generalized effect algebra 
${{\mathcal G}_D({\mathcal H})}$ possesses 
an order determining set of generalized states. 
\end{corollary}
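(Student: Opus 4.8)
The plan is to derive this at once from Theorem \ref{vnoreni}. Consider the identity map \(\mathrm{id}:\mathcal{G}_D(\mathcal{H})\to\mathcal{G}_D(\mathcal{H})\). It is obviously a morphism, and it is order reflecting because on \(\mathcal{G}_D(\mathcal{H})\) the generalized-effect-algebra order is, by Statement \ref{totoznost} together with the definition of \(\leq\) in a generalized effect algebra, exactly the relation ``\(A\leq B\) iff \(B-A\) is positive'' inherited from \(Lin_D(\mathcal{H})\). Hence condition (ii) of Theorem \ref{vnoreni} is satisfied (with the given \(\mathcal{H}\) and \(D\)), so condition (i) holds: \(\mathcal{G}_D(\mathcal{H})\) possesses an order determining set of generalized states.

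If one prefers a self-contained argument, I would instead write down the order determining set explicitly. Take \(S=\{\omega_{\vec{x}}\mid \vec{x}\in D\}\), the collection of all generalized vector states, each of which is already a generalized state on \(\mathcal{G}_D(\mathcal{H})\). To check that \(S\) is order determining, let \(A,B\in\mathcal{G}_D(\mathcal{H})\) satisfy \(\omega_{\vec{x}}(A)\leq\omega_{\vec{x}}(B)\) for every \(\vec{x}\in D\). Then \(\soucin{\vec{x}}{(B-A)(\vec{x})}=\omega_{\vec{x}}(B)-\omega_{\vec{x}}(A)\geq 0\) for all \(\vec{x}\in D\), so the linear operator \(B-A\), which is defined on all of \(D\), is positive. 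Thus \(B-A\in\mathcal{G}_D(\mathcal{H})\) and \(A\oplus(B-A)=B\), whence \(A\leq B\) by the definition of the order in a generalized effect algebra. So \(S\) is order determining, and in fact this is the set produced by the proof of the implication (ii)\(\Rightarrow\)(i) in Theorem \ref{vnoreni} applied to \(\varphi=\mathrm{id}\).

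There is no genuine obstacle here: all of the substance has been carried out already in Theorem \ref{umthex} and Theorem \ref{vnoreni} (and, for (ii)\(\Rightarrow\)(i), in \cite{niepa}). The only point that needs a moment's attention is purely definitional: one must make sure that the partial order on \(\mathcal{G}_D(\mathcal{H})\) really is ``difference is a positive operator'', and that ``positive'' means \(\soucin{\vec{x}}{Q(\vec{x})}\geq 0\) for all \(\vec{x}\) in the domain \(D\), so that the family \(\{\omega_{\vec{x}}\}_{\vec{x}\in D}\) detects the order exactly. It is also worth recording explicitly that \(B-A\) has domain \(D\), which is what allows one to conclude \(A\leq B\) from positivity of \(B-A\) without any closure or self-adjoint-extension argument.
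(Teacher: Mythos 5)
Your proposal is correct and matches the paper's (implicit) intent: the corollary is stated immediately after Theorem \ref{vnoreni} precisely so that it follows from the implication (ii)\(\Rightarrow\)(i) applied to the identity morphism on \(\mathcal{G}_D(\mathcal{H})\), and the order determining set obtained is exactly the family of generalized vector states \(\{\omega_{\vec{x}}\mid\vec{x}\in D\}\) that your self-contained argument verifies directly. Your explicit check that \(B-A\) is defined on all of \(D\) and positive there is a worthwhile clarification, but it is the same argument.
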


\begin{corollary}{\rm\cite[Corollary 2]{rieza}} 
\label{corrieya} The following conditions and the conditions {\rm(i)-(iii) }
from Theorem \ref{vnoreni} are equivalent for every 
effect algebra \(E\).\\
{\rm(a)}
There exists an order determining  set 
\(S\) of states on $E$;
\\
{\rm(b)} there exists a complex Hilbert space ${\mathcal H}$ and 
an  embedding  of effect algebras from  \(E\) to the 
 Hilbert space effect algebra ${\mathcal E}({\mathcal H})$;\\ 
{\rm(c)} there exists a set $T$ and an  embedding  of 
effect algebras from \(E\) to  ${{[0,1]}}^{T}$. 
\end{corollary}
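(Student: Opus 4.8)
The plan is to deduce conditions \((a)\)--\((c)\) from the already established Theorem \ref{vnoreni} by closing the short cycle \((a)\Longrightarrow(b)\Longrightarrow(ii)\Longrightarrow(i)\Longrightarrow(a)\) together with the separate equivalence \((a)\Longleftrightarrow(c)\); the one step carrying real content is \((a)\Longrightarrow(b)\), and the rest imports machinery already available.

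Three of the implications are routine. For \((i)\Longrightarrow(a)\) one replaces each nonzero \(g\) in an order determining set \(S\) of generalized states by the state \(g/g(1_E)\) (note that \(g(1_E)=0\) forces \(g\equiv 0\), since \(0\le g(y)\le g(1_E)\) for all \(y\)); the resulting set of states is still order determining because \(g(1_E)>0\), so that \(g(a)\le g(b)\) if and only if \(g(a)/g(1_E)\le g(b)/g(1_E)\). For \((b)\Longrightarrow(ii)\) one observes that, as a partially ordered set, \({\mathcal E}({\mathcal H})\) sits inside \({\mathcal G}_{{\mathcal H}}({\mathcal H})\), the operation \(\oplus\) of \({\mathcal E}({\mathcal H})\) being the restriction of the operator sum, and the inclusion \({\mathcal E}({\mathcal H})\hookrightarrow{\mathcal G}_{{\mathcal H}}({\mathcal H})\) is an order reflecting morphism (both orders read ``the difference is positive''); composing it with an embedding \(E\to{\mathcal E}({\mathcal H})\) yields an order reflecting morphism \(E\to{\mathcal G}_D({\mathcal H})\) with \(D={\mathcal H}\). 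The implication \((ii)\Longrightarrow(i)\) is already part of Theorem \ref{vnoreni}. Finally \((a)\Longleftrightarrow(c)\) is proved exactly as \((i)\Longleftrightarrow(iii)\) of Theorem \ref{vnoreni}, with \({\mathbb R}^{+}_0\) replaced by \([0,1]\); the only new point is that a coordinate projection \(\pi_t\circ g\) of an embedding \(g:E\to[0,1]^{T}\) is a state, which holds because \(g(E)\) is a sub-effect algebra of \([0,1]^{T}\) and hence contains the top element \({\mathbf 1}\), forcing \(g(1_E)={\mathbf 1}\).

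The core implication \((a)\Longrightarrow(b)\) I would prove as in \cite[Remark 3]{rieza}. Given an order determining set \(S\) of states, feed it to Theorem \ref{umthex} to obtain \(\varphi_S:E\to{\mathcal G}_{{\mathcal E}_{lin}(S)}(l_2(S))\). Since every \(s\in S\) is a state, \(s(a)\le s(1_E)=1\), so \(S\) is bounded and Theorem \ref{umthex}(v) makes each \(\varphi_S(a)\) bounded; in fact \(\soucin{{\mathbf x}}{\varphi_S(a)({\mathbf x})}=\sum_{s}s(a)x_s^{2}\le\|{\mathbf x}\|^{2}\), so \(0\le\varphi_S(a)\le\mathrm{id}\) on the dense domain \({\mathcal E}_{lin}(S)\). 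Thus \(\varphi_S(a)\) extends uniquely to a bounded self-adjoint operator \(\overline{\varphi_S(a)}\) on \(l_2(S)\) with \(0\le\overline{\varphi_S(a)}\le\mathrm{id}\), i.e.\ \(\overline{\varphi_S(a)}\in{\mathcal E}(l_2(S))\). Then \(a\mapsto\overline{\varphi_S(a)}\) is a morphism of effect algebras --- if \(a\oplus b\) is defined in \(E\) then \(\varphi_S(a)+\varphi_S(b)=\varphi_S(a\oplus b)\le\mathrm{id}\), so \(\overline{\varphi_S(a)}\oplus\overline{\varphi_S(b)}\) is defined in \({\mathcal E}(l_2(S))\) and equals \(\overline{\varphi_S(a\oplus b)}\) --- it is injective because \(S\) separates points, and it is order reflecting by Theorem \ref{umthex}(iv). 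Since any order reflecting morphism of effect algebras is an embedding, this is the desired embedding \(E\hookrightarrow{\mathcal E}(l_2(S))\).

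The one delicate point is the passage from the densely defined operators \(\varphi_S(a)\) to honest elements of \({\mathcal E}(l_2(S))\): I must check that the bounded extension of \(\varphi_S(a)\) is self-adjoint --- automatic, since a bounded symmetric operator defined on the whole space is self-adjoint --- and that the effect-algebra operation survives the extension, i.e.\ that definedness of \(a\oplus b\) in \(E\) forces \(\overline{\varphi_S(a)}+\overline{\varphi_S(b)}\le\mathrm{id}\). Everything else is bookkeeping parallel to the proof of the corollary following Theorem \ref{mthex}.
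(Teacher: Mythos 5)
Your proposal is correct and follows essentially the same route as the paper: the paper also closes the cycle via the ``evident'' implications (i)\(\Rightarrow\)(a), (b)\(\Rightarrow\)(ii), (a)\(\Leftrightarrow\)(c), and proves (a)\(\Rightarrow\)(b) by the second option it mentions, namely that each \(\varphi_S(a)\) is bounded by Theorem~\ref{umthex}(v), satisfies \(\varphi_S(a)\le I\), and hence extends uniquely to an element of \({\mathcal E}(l_2(S))\). Your write-up merely supplies the details (normalization of generalized states, self-adjointness of the bounded extension, survival of \(\oplus\) and of order reflection under the extension) that the paper leaves implicit.
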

\begin{proof} Evidently, (i)  \(\implies\) (a), (b)  \(\implies\) (ii), 
(ii) \(\implies\) (i) and 
(a) \(\Longleftrightarrow\) (c). Let us show that (a)  \(\implies\) (b).
We may use either \cite[Corollary 2]{rieza} or observe 
that 
any $\varphi_S(a)$, $a\in E$, is bounded by Theorem \ref{umthex} (v) and hence 
it posseses a unique extension to $l_2(S)$. Moreover, 
it is obvious that $\varphi_S(a)\leq I$.
Therefore  we have an 
 embedding of effect algebras from \(E\) to the 
 Hilbert space effect algebra ${\mathcal E}({ l_2(S)})$.
\end{proof}

\begin{remark} {\rm Note that by \cite[Lemma 3.3]{niepa} the case (b) from 
Corollary \ref{corrieya} is equivalent with the statement that $E$ is 
isomorphic to a sub-effect algebra of the 
 Hilbert space effect algebra ${\mathcal E}({\mathcal H})$, a result established in \cite{rieza}. In the generalized 
 algebra case 
(see Example \ref{excd}) we have to work with order reflecting maps instead of embeddings.
}\end{remark}


\end{document}